\crefname{construction}{Construction}{Constructions}
\crefname{discussion}{}{}
\newtheorem{theorem}[subsection]{Theorem}
\newtheorem{proposition}[subsection]{Proposition}
\newtheorem{corollary}[subsection]{Corollary}
\newcounter{intro}
\theoremstyle{definition}
\newtheorem{definition}[subsection]{Definition}
\theoremstyle{remark}
\newtheorem{remark}[subsection]{Remark}
\numberwithin{equation}{subsection}
\theoremstyle{plain}
\newtheorem{innercustomthm}{Theorem}
\newenvironment{customthm}[1]
  {%
   \begin{innercustomthm}}
  {\end{innercustomthm}}
\newtheorem{innercustomprop}{Proposition}
\DeclareMathOperator{\depth}{depth}
\DeclareMathOperator{\edim}{edim}
\DeclareMathOperator{\projdim}{projdim}
\DeclareMathOperator{\Ass}{Ass}
\newcommand{\cat}[1]{{\mathsf{#1}}}
\newcommand{\Ext}[4][]{\operatorname{Ext}^{#1}_{#2}(#3,#4)}
\DeclareMathOperator{\height}{height}
\DeclareMathOperator{\bigheight}{bigheight}
\DeclareMathOperator{\superheight}{superheight}
\newcommand{\Hom}[4][]{\operatorname{Hom}^{#1}_{#2}(#3,#4)}
\DeclareMathOperator{\level}{level}
\DeclareMathOperator{\f_rank}{f-rank}
\newcommand{\set}[2]{\left\{#1 \,\middle|\, #2\right\}}
\newcommand{\susp}{\Sigma}
\DeclareMathOperator{\Spec}{Spec}
\newcommand{\Tor}[4][]{\operatorname{Tor}_{#1}^{#2}(#3,#4)}
\newcommand{\fm}{\mathfrak{m}}
\newcommand{\fn}{\mathfrak{n}}
\newcommand{\fp}{\mathfrak{p}}
\begin{document}

\title[Level Inequalities]{A lower bound on levels with \\ applications to Koszul Complexes}

\author[A. Kekkou]{Antonia Kekkou}
\address{Department of Mathematics,
University of Utah, Salt Lake City, UT 84112, U.S.A.}
\email{kekkou@math.utah.edu}

\date{\today}
\keywords{Improved New Intersection Theorem, level, Koszul complex, free rank}

\begin{abstract}
In this paper, we establish a lower bound on the level of a perfect complex with $I$-power torsion homology on positive degrees and an $I$-power torsion minimal generator for $H_0(F)$. Examples are provided to demonstrate that the bound is optimal. This result is applied to improve existing lower bounds on the level of a Koszul complex on various classes of sequences.
\end{abstract}

\maketitle
\setcounter{tocdepth}{1}

\section{Introduction}
This paper concerns certain homological invariants of finite free complexes over commutative rings. We prove the following:
\begin{customthm}{\ref{theorem_level_ineq}} Let $R$ be a commutative noetherian local ring, $I$ an ideal in $R$ and 
\begin{equation*}
F \colon 0 \longrightarrow F_n \longrightarrow F_{n-1} \longrightarrow \cdots \longrightarrow F_1 \longrightarrow F_0 \longrightarrow 0
\end{equation*}
a finite free $R$-complex with $H_0(F) \neq 0$. If $H_i(F)$ is $I$-power torsion for $i\geq 1$ and a minimal generator of $H_0(F)$ is $I$-power torsion, then the following inequality holds:
\begin{equation*}
\level^R F  \geq \dim R-\dim R/I+1.
\end{equation*}
\end{customthm}

An element is said to be \emph{$I$-power torsion} if there exists an $s>0$ such that $I^s$ annihilates it. Accordingly, a module is said to be \emph{$I$-power torsion} if each of its elements is $I$-power torsion. The $R$-level of a finite free $R$-complex measures the minimal number of mapping cones required to construct the complex from finite free modules; see \cite[Section 2]{ABIM:2010}. The $R$-level, in a way, serves as a measure of the complexity of a complex. It is bounded above by the length of the complex; see \cite[Lemma 2.5.2]{ABIM:2010}. From this observation, we deduce that \cref{theorem_level_ineq} refines \cite[Theorem 2.2]{CF:2022} due to Christensen and Ferraro, which establishes that a complex $F$ as above has length at least $\dim R-\dim R/I$.

Another closely related result is due to Avramov, Iyengar, and Neeman in \cite[Theorem 4.2]{AIN:2018}. Their result is that the $R$-level of $F$ must be at least $\height I+1$. \cref{theorem_level_ineq} extends this result, as the inequality $\dim R-\dim R/I \geq \height I$ always holds. 

These results fall under the general framework of Evans and Griffith's version of the New Intersection Theorem \cite{EG:1981}, stated by Hochster in \cite{H:1983}. Evans and Griffith's version is a generalization of the New Intersection Theorem due to Peskine and Szpiro in \cite{PS:1974}, and Roberts in \cite{R:1987}. The proof of the \cref{theorem_level_ineq} is derived by combining the proofs of two earlier versions of the New Intersection Theorem, namely those in \cite{AGMSV:2017} and \cite{CF:2022}.

\section{Preliminaries}
Let $R$ be a commutative noetherian ring. By an $R$-complex, we mean a chain complex of $R$-modules; we use lower indexing. We write $\cat{D}(R)$ for the derived category of $R$-modules, which we view as a triangulated category with the usual suspension $\susp$ acting as the translation functor. The homological supremum and infimum of an $R$-complex $M$, are denoted by
\begin{align*}
\sup H_*(M)=\sup \set{i\in \mathbb{Z}}{H_i(M)\neq 0} \\
\inf H_*(M)=\inf \set{i\in \mathbb{Z}}{H_i(M)\neq 0}.
\end{align*}
We write $K(\underline{x};M)$ for the Koszul complex on a sequence $\underline{x}=x_1, \ldots, x_n$ over an $R$-complex $M$ and $H(\underline{x}; M)$ for its homology. 

\subsection{Local Cohomology} Let $I$ be an ideal and $M$ an $R$-complex. The $I$-power torsion subcomplex of $M$ in degree $i\in \mathbb{Z}$ is defined by
\begin{equation*}
(\Gamma_I M)_i \coloneqq \set{m \in M_i}{ I^n m = 0, \text{ for some } n \geq 0}.
\end{equation*}
The corresponding right derived functor is denoted by $R\Gamma_I (M)$. The local cohomology modules of $M$ supported on $I$ are computed by
\begin{equation*}
H^i_I (M) \coloneqq H^i(R\Gamma_I (M)) \text{ for } i \in \mathbb{Z}.
\end{equation*}

\subsection{Depth} The \emph{$I$-depth} of $M$ is given by
\begin{equation*}
\depth_R(I,M) \coloneqq \inf \set{i}{H^i_I (M) \neq 0}
\end{equation*}
and it is infinity if $H^i_I (M) = 0$ for all $i$; see \cite{IMSW:2021}. In a local ring $(R, \fm)$, the $\depth_R(M)$ refers to the $\depth_R(\fm,M)$. From \cite{I:1999}, depth can also be computed using Ext groups and Koszul homology. For Ext groups, we have
\begin{equation*}
\depth_R(I,M) = \inf\set{i}{\Ext [i] {R}  {R/I} {M}  \neq 0}.    
\end{equation*} 
If a sequence $\underline{x} \coloneqq x_1, \ldots, x_n$ generates $I$, then we also have
\begin{equation} \label{depth_koszul}
\depth_R(I,M) = n - \sup\set{i}{H_i(\underline{x};M) \neq 0}.
\end{equation} 

\subsection{Auslander-Buchsbaum equality} \label{Auslander_Buchsbaum} Let $R$ be a commutative noetherian local ring. An $R$-complex $F$ is said to be \emph{perfect} if it is quasi-isomorphic to a finite free $R$-complex. For such an $F$ and any $R$-complex $M$, one has
\begin{equation*}
\depth_R(M\otimes_R^L F)=\depth_R M-\projdim_R F.    
\end{equation*}
See \cite[Theorem 2.4]{FI:2003}.

\subsection{Derived complete complexes}  \label{derived_I_complete_tensor_perfect_complex} Let $I$ be an ideal in $R$. We denote the left derived $I$-completion functor by $L\Lambda^I$. An $R$-complex $M$ is called \emph{derived $I$-complete} if the natural map
\begin{equation*}
M \overset{}{\longrightarrow} L\Lambda^I (M)    
\end{equation*}
is a quasi-isomorphism; see \cite{CFH:2024,DG:2002, GM:1992, IMSW:2021} for details.

When $F$ is a perfect $R$-complex and $M$ is a derived $I$-complete $R$-complex, then $F\otimes_R M$ is also a derived $I$-complete $R$-complex. This holds due to the following canonical map from \cite[1.10]{FI:2003}
\begin{equation*}
N \otimes_R^L L\Lambda^I M \longrightarrow L\Lambda^I (N \otimes_R^L M)\,,    
\end{equation*}
which becomes isomorphism when $N$ is a perfect $R$-complex.

\begin{proposition}
\cite[Remark 1.7]{IMSW:2021}  Let $I$ be an ideal and $M$ an $R$-complex. If $\sup H_*(M)<\infty $, then the following inequality holds
\begin{equation}\label{depth_sup_ineq}
\depth_R (I, M) \geq -\sup H_*(M)\,,
\end{equation} 
with equality if and only if $\Gamma_I (H_{s} (M))\neq 0$, where $s\coloneqq \sup H_*( M)$. 
\end{proposition}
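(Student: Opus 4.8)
Write $s \coloneqq \sup H_*(M)$, and assume $M \not\simeq 0$, so that $s \in \mathbb{Z}$. As $R$ is noetherian, fix a sequence $\underline{x} = x_1, \dots, x_n$ generating $I$. The plan is to represent $R\Gamma_I(M)$ by an explicit complex and extract both assertions from its lowest-degree cohomology. Let $\check{C}^\bullet(\underline{x})$ be the stable (Čech) Koszul complex $0 \to R \to \bigoplus_i R_{x_i} \to \cdots \to R_{x_1 \cdots x_n} \to 0$, placed in cohomological degrees $0, \dots, n$; it is a bounded complex of flat $R$-modules, and $R\Gamma_I(-) \simeq \check{C}^\bullet(\underline{x}) \otimes_R (-)$ on $\cat{D}(R)$. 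Choose a semifree resolution $F \xrightarrow{\sim} M$ with $F_i = 0$ for $i > s$; such a resolution exists because $H_i(M) = 0$ for $i > s$. Viewing $F$ cohomologically with $F_i$ placed in cohomological degree $-i$, the total complex $T \coloneqq \check{C}^\bullet(\underline{x}) \otimes_R F$ represents $R\Gamma_I(M)$, and it is concentrated in cohomological degrees $\geq -s$ because $\check{C}^\bullet(\underline{x})$ lives in cohomological degrees $0$ through $n$ while $F$ lives in cohomological degrees $\geq -s$.

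It follows at once that $H^i_I(M) = H^i(T) = 0$ for $i < -s$, so $\depth_R(I,M) = \inf \set{i}{H^i_I(M) \neq 0} \geq -s$. For the equality statement, note that the degree-$(-s)$ term of $T$ is $\check{C}^0(\underline{x}) \otimes_R F_s = F_s$, that its differential carries $p \in F_s$ to $\bigl(\pm d_F(p), (p/1)_i\bigr) \in F_{s-1} \oplus \bigoplus_i (F_s)_{x_i}$, and that there is no differential into this term since $T$ vanishes in degrees below $-s$. Hence $H^{-s}_I(M) = H^{-s}(T)$ is precisely the set of $p \in F_s$ with $d_F(p) = 0$ whose image in each localization $(F_s)_{x_i}$ is zero. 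The first condition places $p$ in the cycle module $Z_s(F)$, which equals $H_s(F) = H_s(M)$ and sits inside $F_s$ as an honest submodule because $F_{s+1} = 0$; the second condition says $x_i^{k_i} p = 0$ for some $k_i \geq 0$, and imposing it for every $i$ is, by a pigeonhole estimate on monomials in the $x_i$, equivalent to $I^N p = 0$ for some $N$. Thus $H^{-s}_I(M) \cong \Gamma_I\bigl(H_s(M)\bigr)$, and together with the inequality this shows $\depth_R(I,M) = -s$ if and only if $\Gamma_I(H_s(M)) \neq 0$.

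The one delicate input is used at the start: the identification $R\Gamma_I \simeq \check{C}^\bullet(\underline{x}) \otimes_R (-)$ on $\cat{D}(R)$, and the existence of a semifree resolution of $M$ concentrated in homological degrees $\leq s$ — both standard, the latter by the usual construction that kills homology from the top downward. As an alternative to the explicit end computation, one may run the hyperhomology spectral sequence $E_2^{j,i} = H^j_I(H_i(M)) \Rightarrow H^{j-i}_I(M)$, which converges since $\check{C}^\bullet(\underline{x})$ consists of finitely many flat modules: its only entry of total degree at most $-s$ is $E_2^{0,s} = H^0_I(H_s(M)) = \Gamma_I(H_s(M))$, which therefore supports no differentials, survives to $E_\infty$, and computes $H^{-s}_I(M)$ — yielding both statements simultaneously.
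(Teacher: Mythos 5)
Your argument is correct. Note that the paper does not prove this proposition at all --- it is quoted from \cite[Remark 1.7]{IMSW:2021} --- so there is no in-text proof to compare against; your computation (representing $R\Gamma_I(M)$ by the \v{C}ech complex tensored with a semifree resolution concentrated in degrees $\leq s$, observing the total complex vanishes in cohomological degrees below $-s$, and identifying the bottom cohomology $H^{-s}_I(M)$ with $\Gamma_I(Z_s(F)) = \Gamma_I(H_s(M))$) is exactly the standard argument behind the cited remark, and every step checks out: $Z_s(F)=H_s(F)$ because $F_{s+1}=0$, and the simultaneous vanishing of $p$ in all localizations $(F_s)_{x_i}$ is indeed equivalent to $I^N p=0$. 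One small imprecision in your spectral-sequence aside: being the only entry of total degree at most $-s$ does not by itself rule out differentials \emph{out of} $E_2^{0,s}$, since those land in total degree $-s+1$; they vanish for the separate reason that their targets $E_r^{r,\,s+r-1}=H^r_I(H_{s+r-1}(M))$ are zero because $H_i(M)=0$ for $i>s$. Since the explicit computation in your main argument is complete and does not rely on the spectral sequence, this is only a cosmetic gap in the alternative route.
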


The following result derives from the proof of \cite[Theorem 2.7]{IMSW:2021}.

\begin{proposition} \cite[Theorem 2.7]{IMSW:2021}
Let $(R, \fm)$ be a commutative noetherian local ring, $I$ an ideal of $R$ and $M$ a derived $\fm$-complete $R$-complex. Then, the following holds
\begin{equation} \label{depth_ideal_ineq}
\depth_R M \leq \depth_R (I,M) + \dim R/I .
\end{equation} 
More specifically, for every prime ideal $\fp$, the following inequality holds
\begin{equation}
\label{depth_prime_ineq}
\depth_R M  \leq \depth_{R_{\fp}} M_{\fp} + \dim R/\fp.
\end{equation}   
\end{proposition}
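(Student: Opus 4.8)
The plan is to deduce \eqref{depth_prime_ineq} from \eqref{depth_ideal_ineq}, and to prove \eqref{depth_ideal_ineq} by reducing it, through the Koszul description of depth, to one lemma about derived $\fm$-complete complexes.

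The implication \eqref{depth_ideal_ineq} $\Rightarrow$ \eqref{depth_prime_ineq} is routine: localizing the Ext-formula for depth at $\fp$ gives $\depth_{R_\fp}M_\fp=\inf\{i:\Ext[i]{R}{R/\fp}{M}_\fp\neq 0\}\geq\inf\{i:\Ext[i]{R}{R/\fp}{M}\neq 0\}=\depth_R(\fp,M)$, so \eqref{depth_ideal_ineq} applied to the ideal $\fp$ yields $\depth_R M\leq\depth_R(\fp,M)+\dim R/\fp\leq\depth_{R_\fp}M_\fp+\dim R/\fp$. So I concentrate on \eqref{depth_ideal_ineq}; we may assume $\sup H_*(M)<\infty$, since otherwise $\depth_R M=-\infty$. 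Pick a generating sequence $\underline{x}=x_1,\dots,x_n$ for $I$ and, using $\dim R/I=d$, elements $z_1,\dots,z_d\in\fm$ whose images in $R/I$ form a system of parameters, so that $\sqrt{I+(z_1,\dots,z_d)}=\fm$. Put $W\coloneqq K(\underline{x};M)$. Since $K(\underline{x};R)$ is perfect and $M$ is derived $\fm$-complete, $W$ is derived $\fm$-complete, and for the same reason so is each $K(z_1,\dots,z_j;W)$. By \eqref{depth_koszul}, $\depth_R(I,M)=n-\sup H_*(K(\underline{x};M))$, while — $I$-depth depending only on $\sqrt I$ — $\depth_R M=\depth_R(\fm,M)=\depth_R\bigl(I+(z_1,\dots,z_d),M\bigr)=(n+d)-\sup H_*\bigl(K(\underline{x},z_1,\dots,z_d;M)\bigr)$. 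As $K(\underline{x},z_1,\dots,z_d;M)\cong K(z_1,\dots,z_d;W)$, the target bound \eqref{depth_ideal_ineq} (i.e.\ $\depth_R M\leq\depth_R(I,M)+d$) is exactly the inequality $\sup H_*\bigl(K(z_1,\dots,z_d;W)\bigr)\geq\sup H_*(W)$.

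I would obtain this by applying $d$ times the following lemma (to $W$, then to $K(z_1;W)$, and so on, each a derived $\fm$-complete complex with finite homological supremum): \emph{if $W$ is derived $\fm$-complete with $\sup H_*(W)<\infty$ and $z\in\fm$, then $\sup H_*(K(z;W))\geq\sup H_*(W)$}. To prove the lemma one may assume $W\not\simeq 0$ and set $t\coloneqq\sup H_*(W)$, so $H_t(W)\neq 0$. Writing $K(z;W)=\cone(W\xrightarrow{z}W)$, the triangle $W\xrightarrow{z}W\to K(z;W)\to\susp W$ together with $H_{t+1}(W)=0$ gives $H_{t+1}(K(z;W))\cong(0:_{H_t(W)}z)$ and an injection $H_t(W)/zH_t(W)\hookrightarrow H_t(K(z;W))$; hence $\sup H_*(K(z;W))<t$ would force multiplication by $z$ to be bijective on $H_t(W)$, i.e.\ would make $H_t(W)$ an $R_z$-module. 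But since $W$ is derived $\fm$-complete, each of its homology modules is a derived $\fm$-complete module, hence — as $zR\subseteq\fm$ — a derived $zR$-complete module; and a derived $zR$-complete module $T$ that is an $R_z$-module must vanish, because derived $zR$-completeness says $\RHom{R}{R_z}{T}\simeq 0$ whereas $\RHom{R}{R_z}{T}\simeq T$ for an $R_z$-module $T$ (using $R_z\lotimes_R R_z\simeq R_z$). This contradicts $H_t(W)\neq 0$, proving the lemma.

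I expect the main obstacle to be precisely this lemma, and within it the point where derived $\fm$-completeness is essential: it is what prevents an element of $\fm$ from acting invertibly on the top homology of $W$, which is exactly the mechanism that would otherwise let the Koszul homology drop below $\sup H_*(W)$ — and without a completeness hypothesis the statement is genuinely false (for instance $M=R_z$ over $R=k[[z]]$, with $I=0$, has $\depth_R M=\infty$ but $\depth_R(I,M)+\dim R/I=1$). The supporting facts I rely on — that a perfect-complex twist of a derived complete complex is derived complete (noted in the preliminaries), that a complex over a noetherian local ring is derived $\fm$-complete if and only if each of its homology modules is, and that derived $\fm$-completeness implies derived $zR$-completeness for $z\in\fm$ — are standard; see e.g.\ \cite{CFH:2024}. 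The degenerate cases ($W\simeq 0$, $\sup H_*(M)=\infty$, or $R\Gamma_I M\simeq 0$), where the relevant quantities are $\pm\infty$ and the inequalities are immediate, I would dispatch separately.
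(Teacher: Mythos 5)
Your proof is correct, but note that the paper does not actually prove this proposition: it is quoted from \cite[Theorem 2.7]{IMSW:2021}, with only the remark that it ``derives from the proof of'' that theorem. So the real comparison is with the cited source, and there your argument runs on the same fuel: after translating depth into Koszul homology via \eqref{depth_koszul}, the entire content is that an element $z\in\fm$ cannot act bijectively on the nonzero top homology of a derived $\fm$-complete complex, which you extract from the vanishing of $\RHom{R}{R_z}{-}$ on derived $\fm$-complete objects --- exactly the role derived completeness plays in the cited proof. Two structural differences are worth recording. First, you reverse the logical order of the two displays: the paper's ``more specifically'' signals that \eqref{depth_prime_ineq} is the primary statement, with \eqref{depth_ideal_ineq} deduced by choosing a prime $\fp\supseteq I$ computing $\depth_R(I,M)$; you instead prove \eqref{depth_ideal_ineq} for an arbitrary ideal directly and then specialize to $I=\fp$, using $\depth_R(\fp,M)\le\depth_{R_\fp}M_\fp$, which is legitimate since $\Ext[i]{R}{R/\fp}{M}_\fp\cong\Ext[i]{R_\fp}{R_\fp/\fp R_\fp}{M_\fp}$ because $R/\fp$ has a resolution by finite free modules. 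Second, your one-element-at-a-time induction through $K(z_1,\dots,z_j;W)$, which remain derived $\fm$-complete because each Koszul complex is perfect, is a clean packaging; the auxiliary facts you invoke (a complex is derived $\fm$-complete iff all its homology modules are; derived $\fm$-completeness implies derived $zR$-completeness for $z\in\fm$; $\RHom{R}{R_z}{T}\simeq T$ for an $R_z$-module $T$) are standard and covered by the references you cite.

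The one place you are too quick is the reduction to $\sup H_*(M)<\infty$: it is not ``immediate'' that unbounded-above homology forces $\depth_R M=-\infty$, and without derived completeness it is false (a direct sum of shifted copies of $R_z$ has homology unbounded above, yet $R\Gamma_{\fm}$ of it vanishes, so its depth is $+\infty$). For derived $\fm$-complete $M$ the claim is true and easily patched: $M\simeq L\Lambda^{\fm}(R\Gamma_{\fm}M)$, and $L\Lambda^{\fm}$, computed as Hom from a telescope complex of free modules concentrated in homological degrees $-n,\dots,0$, raises homological suprema by at most $n$; hence $\sup H_*(M)=\infty$ forces $\sup H_*(R\Gamma_{\fm}M)=\infty$, that is, $\depth_R M=-\infty$. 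With that patch (or with the harmless additional hypothesis that $H_*(M)$ is bounded, which is all the paper ever uses), your argument is complete.
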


\subsection{Level} \label{Level} We can define the level of an $R$-complex with respect to any complex, but our focus is on the level with respect to $R$. 

\begin{definition} \cite[2.3]{ABIM:2010}
Let $M$ be an $R$-complex. The level of $M$ with respect to $R$, or just $R$-level is defined as follows
\begin{equation*}
\level^R M \coloneqq \inf
\set{ n\geq 0\ }{ \begin{gathered}
\text{ there is an exact triangle} \\
K \to L \oplus M \to N\to \susp K \\
\text{with} \level^R K=1 \text{ and } \level^R N= n-1
\end{gathered}}
\end{equation*}
where $\level^R M=0$ if $M$ is quasi-isomorphic to zero, and $\level^R M=1$ if $M$ is built out of $R$ using (de)suspensions, retracts and finite coproducts.
\end{definition}

An $R$-complex $F$ is perfect if and only if it has finite level with respect to $R$. The length of a perfect complex provides an upper bound for its $R$-level \cite[Lemma 2.5.2]{ABIM:2010}, but it can be arbitrarily larger than the $R$-level. For instance, over a regular local ring, the $R$-level of any perfect $R$-complex cannot exceed the ring's dimension, while there are perfect $R$-complexes with arbitrarily large length; see \cite[Example 5.3]{ABIM:2010}. 

The following result is a special case of \cite[Lemma 2.4 (6)]{ABIM:2010} and provides a comparison of the level after base change.

\begin{proposition} \cite[Lemma 2.4 (6)]{ABIM:2010} 
If $S$ is an $R$-algebra, given the exact functor $-\otimes^L_R S \colon \cat{D}(R) \to \cat{D}(S)$, then for any $M\in \cat{D}(R)$, the following inequality holds
\begin{equation}  \label{level_change_basis}
\level^R M\geq \level^S (M \otimes_R^L S).
\end{equation}
\end{proposition}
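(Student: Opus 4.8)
The plan is a straightforward induction on $n \coloneqq \level^R M$; if $n = \infty$ there is nothing to prove, so assume $M$ is perfect. Write $G \coloneqq - \otimes_R^L S \colon \cat{D}(R) \to \cat{D}(S)$ and record the three features of $G$ that the argument rests on: $G$ is exact, hence commutes with $\susp^{\pm 1}$ and carries exact triangles to exact triangles; $G$ is additive, hence preserves finite coproducts and retracts; and $G(R) = R \otimes_R^L S \simeq S$ in $\cat{D}(S)$. For bookkeeping it is convenient to replace the equalities in the definition of level by inequalities: for each $n \geq 0$ let $\thick^n_R$ denote the full subcategory of $\cat{D}(R)$ consisting of the complexes $X$ with $\level^R X \leq n$. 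These subcategories are nested, $\thick^0_R \subseteq \thick^1_R \subseteq \cdots$, so an object $X$ lies in $\thick^n_R$ with $n \geq 1$ precisely when there is an exact triangle $K \to L \oplus X \to N \to \susp K$ with $K \in \thick^1_R$ and $N \in \thick^{n-1}_R$ (a triangle certifying a smaller level still certifies level $\leq n$). I would run the induction on this reformulation.

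For the base cases: $\thick^0_R$ consists of the complexes quasi-isomorphic to $0$, and $G$ sends such a complex to a complex quasi-isomorphic to $0$, so $G(\thick^0_R) \subseteq \thick^0_S \subseteq \thick^1_S$. Every object of $\thick^1_R$ is obtained from $R$ by finitely many (de)suspensions, retracts, and finite coproducts; since $G$ is compatible with each of these operations and $G(R) \simeq S$, the image $G(X)$ is obtained from $S$ by the same operations, so $G(X) \in \thick^1_S$. For the inductive step, fix $n \geq 2$ and $M \in \thick^n_R$. Choose an exact triangle $K \to L \oplus M \to N \to \susp K$ with $K \in \thick^1_R$ and $N \in \thick^{n-1}_R$. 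Applying the exact, additive functor $G$ yields an exact triangle $G(K) \to G(L) \oplus G(M) \to G(N) \to \susp G(K)$ in $\cat{D}(S)$, with $G(K) \in \thick^1_S$ by the base case and $G(N) \in \thick^{n-1}_S$ by the induction hypothesis. Hence $G(M) \in \thick^n_S$, i.e. $\level^S(M \otimes_R^L S) \leq n$. Taking $n = \level^R M$ gives the asserted inequality.

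I do not expect a genuine obstacle: this is the general principle that an exact functor of triangulated categories carrying the chosen generator to the chosen generator cannot increase levels, applied to $- \otimes_R^L S$, and it is indeed a special case of \cite[Lemma 2.4 (6)]{ABIM:2010}. The only point needing a little care is the slight mismatch between the recursive clause of the definition (which asks for $\level^R K = 1$ and $\level^R N = n-1$ exactly) and what survives after applying $G$, since $G(K)$ or $G(N)$ may have strictly smaller level — for instance may become quasi-isomorphic to $0$. Passing first to the equivalent formulation in terms of the nested subcategories $\thick^n_R$, as above, removes this friction, after which the induction is entirely routine.
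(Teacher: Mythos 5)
Your argument is correct: the paper itself offers no proof, deferring to \cite[Lemma 2.4 (6)]{ABIM:2010}, and your induction on the level filtration—using that $-\otimes_R^L S$ is exact and additive, hence preserves (de)suspensions, retracts, finite coproducts, and exact triangles, and sends $R$ to $S$—is exactly the standard argument underlying that cited lemma. Your handling of the mismatch between the recursive clause of the definition and the possible drop in level after applying the functor, by passing to the nested subcategories of objects of level at most $n$, is the right fix and is how the argument is usually organized.
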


For an $R$-complex $M$ with non-zero homology, let $P$ be a projective resolution of $M$. For $n \in \mathbb{Z}$, we denote the nth syzygy of $M$ by $\Omega^R_n(M)\coloneqq \susp^{-n} (P_{\geq n})$. It has been shown in  \cite[Lemma 1.2]{AI:2007} that whether $H_0(\Omega^R_n(M))$ is projective is independent of the choice of the projective resolution $P$. The following proposition from \cite{AGMSV:2017} is key to the proof of the level inequality \ref{theorem_level_ineq}.

\begin{proposition} \cite[Theorem 2.1 and Remark 2.5]{AGMSV:2017} Let $M$ be an $R$-complex, with $H_i(M) =0$ for all $a <i<b$, $a, b \in \mathbb{Z}$ and $H_0(\Omega_{b-1}^R(M))$ is not projective, then
\begin{equation} \label{level_bound_ineq}
\level^R M \geq b - a + 1.
\end{equation}   
\end{proposition}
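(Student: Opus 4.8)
The plan is to reduce the statement to an assertion about the single module $H_0(M)$ and to settle that case with the Ghost Lemma. Since $\level^R(-)$ is invariant under (de)suspension and $\Omega^R_n(\susp^j M)\simeq\susp^j\Omega^R_n(M)$ up to the evident reindexing, replacing $M$ by $\susp^{-a}M$ reduces us to the case $a=0$; and since enlarging $a$ only weakens the conclusion, we may moreover take the gap to be as long as possible, so that $H_0(M)\neq 0$. (This packages the observation behind Remark 2.5 of \cite{AGMSV:2017}.) From now on assume $H_0(M)\neq 0$ and $H_i(M)=0$ for $0<i<b$; the goal is $\level^R M\geq b+1$.

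Next I would unwind the hypothesis. Fix a minimal free resolution $F\xrightarrow{\simeq}M$. The gap forces $F$ to be exact in homological degrees $1,\dots,b-1$, so $F_0\leftarrow F_1\leftarrow\cdots\leftarrow F_{b-1}$ is the start of a minimal free resolution of the module $N\coloneqq H_0(M)=\coker(F_1\to F_0)$, and chasing definitions identifies $H_0(\Omega^R_{b-1}(M))=\coker(F_b\to F_{b-1})$ with the syzygy module $\Omega^{b-1}_R(N)$. Thus the hypothesis says exactly that $\Omega^{b-1}_R(N)$ is not projective, i.e.\ $\projdim_R N\geq b$ (with $\projdim_R N=\infty$ when $N$ is not perfect). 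I would then record that $\level^R N\geq\projdim_R N+1$: if $\projdim_R N=\infty$ then $N$ is not perfect and $\level^R N=\infty$; and if $\projdim_R N=p<\infty$, the Ghost Lemma (see \cite{ABIM:2010}) applies to the $p$ quotient morphisms $F_N\twoheadrightarrow (F_N)_{\geq 1}\twoheadrightarrow\cdots\twoheadrightarrow (F_N)_{\geq p}$ coming from the brutal truncations of a minimal free resolution $F_N$ of $N$ — each is $R$-ghost because $(F_N)_{\geq k}$ has homology concentrated in degree $k$, and their composite $F_N\twoheadrightarrow (F_N)_{\geq p}$ is not null-homotopic since $(F_N)_p\neq 0$ by minimality — so $\level^R N\geq p+1$. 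In either case $\level^R N\geq b+1$.

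It then remains to pass back from $N=H_0(M)$ to $M$, i.e.\ to see that $\level^R M\geq\level^R H_0(M)$. The gap provides a truncation triangle $\tau_{\geq b}M\to M\to H_0(M)\to\susp\tau_{\geq b}M$, where $\tau_{\geq b}M$ is the part of $M$ supported in homological degrees $\geq b$; its connecting morphism lies in a $\operatorname{Hom}$-group in $\cat{D}(R)$ between objects whose homologies sit in degrees $\leq 0$, respectively $\geq b+1$, and in the relevant range this group vanishes, so the triangle splits. Hence $\level^R M=\max\{\level^R H_0(M),\,\level^R\tau_{\geq b}M\}\geq\level^R H_0(M)$, and chaining the inequalities gives $\level^R M\geq b+1$.

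The step I expect to be the main obstacle — and the one demanding genuine care — is exactly this last passage, namely controlling the homology of $M$ in degrees $\geq b$. The naive ghost morphisms built directly from the brutal truncations $F_{\geq k}$ of $M$ are isomorphisms on that part of the homology and so fail to be $R$-ghost, which is why it must be stripped off first (via the splitting above, or, when that splitting is not available, via a truncation together with an induction on $b$); checking that the resulting composite of $b$ ghost morphisms is still nonzero — for which minimality of the resolution is indispensable — is the technical heart of the argument.
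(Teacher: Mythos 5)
A preliminary remark: the paper does not prove this proposition at all — it quotes it from \cite{AGMSV:2017} — so your argument has to stand on its own, and as written it does not.

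The decisive gap is the passage from the complex $M$ to the single module $H_0(M)$. The hypothesis only provides a gap $H_i(M)=0$ for $a<i<b$; it allows nonzero homology in arbitrarily many degrees $\leq a$, and your normalization (shift so that $a=0$, then extend the gap downward) merely relocates the top of that homology to degree $0$ — it does not remove the homology in negative degrees. With such homology present, two of your key identifications fail: (i) $H_0(M)\neq\coker(F_1\to F_0)$, so $H_0(\Omega^R_{b-1}(M))=\coker(F_b\to F_{b-1})$ is the $(b-1)$-st syzygy of $\coker(F_1\to F_0)$ and not of $H_0(M)$, hence the inequality $\projdim_R H_0(M)\geq b$ is unjustified; and (ii) the cofibre of $\tau_{\geq b}M\to M$ is $\tau_{<b}M$, whose homology occupies all degrees $\leq 0$, not the module $H_0(M)$, so the (correct) splitting only yields $\level^R M\geq \level^R \tau_{<b}M$. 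The inequality you actually invoke, $\level^R M\geq\level^R H_0(M)$, is false in general: over $R=k[x]/(x^2)$ the two-term complex $R\xrightarrow{\,x\,}R$ placed in degrees $0,-1$ has level $2$, while $H_0\cong k$ has infinite level. Your argument therefore only covers the special case $H_i(M)=0$ for $i<a$, which is precisely not the case needed in this paper: the proposition is applied to $F\otimes_R S$ with $a=s=\sup H_*$ and homology spread over all degrees $0,\dots,s$ below the gap. After splitting off $\tau_{\geq b}M$, what remains is the original problem for a complex with multi-degree homology below the gap; the $b-a$ ghost maps must be run on a resolution of that complex, and the non-null-homotopy of their composite must be extracted from the non-projectivity of $\coker(F_b\to F_{b-1})$ rather than from $\projdim_R H_0(M)$. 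That step is the actual content of \cite[Theorem 2.1]{AGMSV:2017} and is exactly the piece missing from your write-up (your closing paragraph acknowledges it is the technical heart, but does not supply it).

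A second, smaller but genuine, issue is the reliance on minimal free resolutions. The proposition carries no local or noetherian hypothesis — the paper explicitly notes that $R$ need not be noetherian, and applies the result over an $\fm$-complete big Cohen--Macaulay algebra $S$, which is neither local nor noetherian — so ``fix a minimal free resolution of $M$'' is not available, and neither is the Nakayama argument you use for the non-null-homotopy. This part is repairable: the statement is trivial unless $M$ is perfect, in which case $H_0(\Omega^R_{b-1}(M))$ is finitely presented, hence non-projective only if it is non-free after localizing at some prime $\fp$, and level does not increase under $-\otimes^L_R R_\fp$; but this reduction needs to be carried out, and even granting it the gap described above remains.
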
 

In the preceding result, $R$ need not be noetherian. 

\subsection{Balanced big Cohen-Macaulay algebras} The proof of Theorem \ref{theorem_level_ineq} uses the existence of balanced big Cohen–Macaulay algebras. An $R$-algebra $S$ is called \emph{balanced big Cohen–Macaulay algebra} if every system of parameters for $R$, forms an $S$-regular sequence. The existence of such algebras has been proved by Hochster and Huneke in  \cite{H:1975}, \cite{H:2002}, \cite{HH:1991} and \cite{HH:1992} when $R$ is equicharacteristic or $\dim R \leq 3$, and in general by André, in his recent work on the Direct Summand Conjecture \cite{A:2018}. See also \cite{B:2021} for a different proof by Bhatt when $R$ has mixed characteristic. 

\subsection{Acknowledgments} I would like to thank my advisor, Srikanth Iyengar. I am deeply grateful to him for suggesting this problem and for his invaluable guidance, insightful discussions, detailed comments on the manuscript, and constant support throughout the development of this work. I would also like to thank Janina Letz, Des Martin, Claudia Miller and Josh Pollitz for useful conversations on the material. This work is partly supported by National Science Foundation grant DMS-200985. 

\section{The level inequalities}
Here is the Theorem from the introduction.

\begin{theorem}
\label{theorem_level_ineq}  Let $R$ be a commutative noetherian local ring, $I$ an ideal in $R$ and 
\begin{equation*}
F \colon 0 \longrightarrow F_n \longrightarrow F_{n-1} \longrightarrow \cdots \longrightarrow F_1 \longrightarrow F_0 \longrightarrow 0
\end{equation*}
a finite free $R$-complex with $H_0(F) \neq 0$. If $H_i(F)$ is $I$-power torsion for $i\geq 1$ and a minimal generator of $H_0(F)$ is $I$-power torsion, then the following inequality holds:
\begin{equation*}
\level^R F  \geq \dim R-\dim R/I+1.
\end{equation*}
\end{theorem}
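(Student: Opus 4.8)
The strategy is to reduce to a depth computation against a balanced big Cohen–Macaulay algebra, and then to invoke the syzygy criterion for levels in Proposition (the one citing \cite[Theorem 2.1 and Remark 2.5]{AGMSV:2017}). Set $d = \dim R$ and $c = \dim R/I$, so the target is $\level^R F \geq d - c + 1$. Without loss of generality one may pass to the $\fm$-adic completion $\widehat R$ (this only refines $F$, preserves the hypotheses since $H_i(\widehat F) = H_i(F)\otimes_R \widehat R$ and $I$-power torsion is preserved, and can only decrease the level by \cref{level_change_basis}); so assume $R$ is complete, hence it admits a balanced big Cohen–Macaulay algebra $S$. One subtlety: to match the two proof ingredients from \cite{AGMSV:2017} and \cite{CF:2022}, I expect to need to choose a prime $\fp \supseteq I$ with $\dim R/\fp = c$ and to work with a system of parameters adapted to a chain through $\fp$; I would also replace $I$ by $\fp$ at the cost of nothing, since the hypotheses on $F$ are inherited (a $\fp$-power-torsion statement would need care — more on this below).

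**Key steps.** First, reduce to $R$ complete as above, and fix $S$ a balanced big Cohen–Macaulay $R$-algebra. Second, bound the level from below by a syzygy-vanishing statement: by the Proposition from \cite{AGMSV:2017}, it suffices to exhibit integers $a < b$ with $b - a + 1 = d - c + 1$, i.e. $b - a = d - c$, such that $H_i(F) = 0$ for $a < i < b$ and $H_0(\Omega^R_{b-1}(F))$ is not projective. The complex $F$ has homology only in degrees $0 \le i \le n$; since positive homology is $I$-power torsion and supported on $V(I)$, the idea is to split the range: we will take $a = 0$ and $b = d - c$ (replacing $F$ by a truncation/syzygy if $n$ exceeds this, or padding otherwise), but the cleaner route is to apply the criterion after a base change that kills the "easy" part of the homology. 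Third — the heart — show that after tensoring with $S$ and localizing, the relevant syzygy is non-free by a depth count: using the Auslander–Buchsbaum equality $\depth_S(F\otimes^L_R S) = \depth_R S - \projdim_R F$ together with the depth inequalities \cref{depth_ideal_ineq}, \cref{depth_prime_ineq} and \cref{depth_sup_ineq}, and the fact that $\Gamma_I(H_0(F))\neq 0$ coming from the $I$-power-torsion minimal generator hypothesis (so that \cref{depth_sup_ineq} is an equality giving $\depth_R(I, F\otimes^L_R S)$ exactly), one derives that $\projdim_R F \geq d - c$, with the strictness needed for the "$+1$" coming from the non-projectivity of the syzygy at degree $b-1$ rather than from a naive projective dimension bound. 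Concretely: $S$ is balanced big Cohen–Macaulay so $\depth_R S = d$ and $\depth_{R/I \otimes \cdots}$-type quantities are controlled; feeding $M = F \otimes^L_R S$, which is derived $\fm$-complete after completing $S$ (using \cref{derived_I_complete_tensor_perfect_complex}, since $F$ is perfect), into \cref{depth_ideal_ineq} gives $\depth_R M \le \depth_R(I,M) + c$, and $\depth_R M = d - \projdim_R F$, while $\depth_R(I,M) \ge -\sup H_*(M) = 0$ with the value pinned down by $\Gamma_I(H_0(M)) \neq 0$. Carefully combining these forces $\projdim_R F \ge d - c$ and, crucially, that the $(d-c-1)$st syzygy of $F$ has a non-projective $H_0$; then \cref{level_bound_ineq} with $b = d - c$, $a = 0$ (after checking no homology intervenes in $0 < i < d-c$, which may require first replacing $F$ by $\Omega^R_n(F)$ or otherwise rearranging so that the only homology below $b$ sits in degree $0$) yields $\level^R F \ge d - c + 1$.

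**Main obstacle.** The delicate point is the interplay between the two homology "zones" of $F$: the positive-degree $I$-power-torsion homology and the degree-$0$ homology whose minimal generator is $I$-power torsion. The syzygy criterion \cref{level_bound_ineq} wants a genuine gap $H_i(F) = 0$ for $a < i < b$ of width $d - c$, but a priori $F$ can have nonzero homology throughout $1 \le i \le n$; so one must either (i) argue that the $I$-power-torsion positive homology can be "absorbed" — e.g. by localizing at a prime $\fp \supseteq I$ with $\dim R/\fp = c$, where $H_i(F)_\fp$ for $i \ge 1$ becomes zero if $\fp \notin \operatorname{supp}$, but it need not — or (ii) run the AGMSV argument in a form that tolerates torsion homology, which is exactly the fusion of \cite{AGMSV:2017} and \cite{CF:2022} the introduction alludes to. I expect the actual proof to localize and complete to arrange a situation where $F\otimes^L_R S$ over the localized-completed ring has homology concentrated in degree $0$ (the torsion dies under passage to $S$, since a balanced big Cohen–Macaulay algebra has depth $d$ and the torsion modules $H_i(F)$ have support of dimension $\le c < d$, forcing $\operatorname{Tor}$-vanishing after a suitable regular-sequence argument), reducing everything to the classical New Intersection Theorem input on the resulting module; making that vanishing precise, and tracking that the level only goes down under all these operations, is the step I would budget the most care for.
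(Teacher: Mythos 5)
Your proposal assembles the right toolkit (balanced big Cohen--Macaulay algebra, the syzygy criterion \cref{level_bound_ineq} from \cite{AGMSV:2017}, the Auslander--Buchsbaum equality, and the depth inequalities \cref{depth_sup_ineq}, \cref{depth_ideal_ineq}, \cref{depth_prime_ineq}), but the application of \cref{level_bound_ineq} is set up in a way that cannot be made to work, and you explicitly flag this as the ``main obstacle'' without resolving it. You want to apply the criterion with $a=0$, $b=d-c$, which requires $H_i(F)=0$ for $0<i<d-c$ --- but no such vanishing is available; the positive-degree $I$-power-torsion homology can be nonzero throughout $1\le i\le n$, and your speculative fix (that this torsion ``dies under passage to $S$'' by a $\operatorname{Tor}$-vanishing argument) is not true in general and is not what the paper does.

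The paper's resolution of exactly this obstacle runs the AGMSV criterion \emph{over $S$ rather than over $R$}, with the gap placed at the top of the complex rather than the bottom: set $s\coloneqq\sup H_*(F\otimes_R S)$, and apply the criterion with $a=s$, $b=n$, so the required vanishing $H_i(F\otimes_R S)=0$ for $s<i<n$ holds automatically by definition of $s$. The depth computation is then aimed at proving $n-s\ge \dim R-\dim R/I$, not $\projdim_R F\ge d-c$ per se. This splits into two cases. When $s=0$ the $I$-power-torsion minimal generator gives $\Gamma_I(H_0(F\otimes_R S))\neq 0$ and hence equality in \cref{depth_sup_ineq}, and combining with \cref{depth_ideal_ineq} and Auslander--Buchsbaum yields $n\ge \dim R-\dim R/I$. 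When $s\ge 1$ one instead picks $\fp\in\Ass H_s(F\otimes_R S)$, runs Auslander--Buchsbaum and \cref{depth_prime_ineq} over $R_\fp$ to get $n\ge \dim R-\dim R/\fp+s$, and then proves $I\subseteq\fp$ by contradiction (if not, $F_\fp\simeq H_0(F)_\fp$ in $\cat D(R_\fp)$, forcing $\depth R_\fp\ge\dim R_\fp+s$, impossible for $s\ge 1$). Finally, non-projectivity of $\Omega\coloneqq H_0(\Omega_{n-1}^S(F\otimes_R S))$ follows from minimality of $F$ (so $F_n\neq 0$) together with $\fm S\neq S$, via a nonzero $\Tor_1^S(S/\fn,\Omega)$; then \cref{level_bound_ineq} over $S$ gives $\level^S(F\otimes_R S)\ge n-s+1$, and \cref{level_change_basis} transfers this to $R$. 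Two smaller points: completing $R$ is unnecessary (BBCM algebras exist over any noetherian local ring by Andr\'e/Bhatt; the paper completes the \emph{algebra} $S$ $\fm$-adically, which is what makes the derived-completeness step \cref{derived_I_complete_tensor_perfect_complex} available), and replacing $I$ by a prime $\fp$ with $\dim R/\fp=c$ up front is problematic precisely because $I$-power torsion does not pass to $\fp$-power torsion --- the paper sidesteps this by choosing $\fp$ from $\Ass H_s(F\otimes_R S)$ and deducing $I\subseteq\fp$ rather than prescribing $\fp$ in advance.
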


\begin{proof}
From $H_0(F)\neq 0$, we have that $\level^R F \geq 1$, so it suffices to prove the inequality for when $\dim R-\dim R/I \geq 1$. Also, by replacing $F$ with its minimal free resolution, we can assume that $F$ is minimal and that $F_n\neq 0$. Next, we take a balanced big Cohen-Macaulay $R$-algebra, and we complete it with respect to $\fm$, obtaining an $\fm$-complete big Cohen-Macaulay $R$-algebra, which we denote by $S$. Set $s\coloneqq \sup H_* (F \otimes_R S)$, and take $\fp \in \Ass H_s(F \otimes_R S)$. Observe that $H(F)_{\fp}\neq 0$, which will be used later. 

We claim that the following inequality always holds
\begin{equation}
\label{inequality_with_I}
n\geq \dim R-\dim R/I +s.
\end{equation}
First, consider the case when $s=0$. It follows from $H_0(F)$ being finitely generated, Nakayama’s Lemma, and \cite[Lemma 2.2]{IMSW:2021} that each minimal generator of $H_0(F)$ gives rise to a nonzero element in $H_0(F \otimes_R S)$. By hypothesis, there exists an $I$-power torsion minimal generator of $H_0(F)$ and we can lift it to a non-zero $I$-power torsion element of $H_0(F \otimes_R S)$, meaning that  $\Gamma_I(H_0(F \otimes_R S)) \neq  0$. Therefore, by \eqref{depth_sup_ineq}, we have
\begin{equation*}
\depth_R(I, F \otimes_R S) = -\sup H_* (F \otimes_R S) = 0.    
\end{equation*}
Due to \eqref{derived_I_complete_tensor_perfect_complex}, the $R$-complex $F \otimes_R S$ is a non-zero derived $\fm$-complete $R$-complex and applying \eqref{depth_ideal_ineq} yields
\begin{equation*}
\depth_ R ( F \otimes_R S) \leq \dim R/I .    
\end{equation*}
Finally, \eqref{Auslander_Buchsbaum} gives
\begin{equation*}
n\geq \projdim_R F = \depth_R S - \depth_R(F \otimes_R S) \geq \dim R - \dim R/I. 
\end{equation*}
Now, consider the case $s\geq 1$. We have the following sequence of (in)equalities
\begin{align} 
n &\geq \projdim_{R_\fp} F_\fp \notag\\
&=\depth_{R_\fp} S_\fp -\depth_{R_\fp}( F\otimes_R S)_\fp \notag\\
&= \depth_{R_\fp} S_\fp +s \label{inequality_with_p}\\
& \geq \depth_R S -\dim R/\fp +s \notag \\
&=\dim R- \dim R/\fp  +s. \notag
\end{align}
The first inequality is trivial, while the second follows from \eqref{depth_prime_ineq} applied to $S$, which is a derived $\fm$-complete $R$-complex. The first equality is from \eqref{Auslander_Buchsbaum}. The second equality follows from \eqref{depth_sup_ineq}, and the last one comes from $S$ being a big Cohen-Macaulay algebra over $R$. 

The proof of Theorem 2.2 in \cite{CF:2022} shows that $I\subseteq \fp$. Here are the details: we assume towards a contradiction that $I \nsubseteq \fp$. It follows that $F_\fp$ is isomorphic to $H_0(F)_\fp$ in the derived category, since $H_i(F)$ is $I$-power torsion for $i \geq 1$. This implies that $\sup H_*( F_\fp) = 0$, since we additionally have that $H(F_\fp)\neq 0$. We then have the following chain of (in)equalities 
\begin{align*}
\depth_{ R_\fp} R_\fp & = \depth_{R_\fp} F_\fp + \projdim_{R_\fp} F_\fp \\
&\geq \projdim_{R_p} F_\fp\\
&\geq \dim R - \dim R/\fp + s\\
&\geq \dim R_\fp + s. 
\end{align*} 
The equality is from \eqref{Auslander_Buchsbaum}. The first inequality is trivial, the second follows from \eqref{inequality_with_p}, and the last inequality is standard. Hence, we obtain $\depth R_\fp\geq \dim R_{\fp} +s$, which is a contradiction, since $s$ is positive. 

Therefore, we conclude that $I\subseteq \fp$, and from \eqref{inequality_with_p}, we obtain that
\begin{equation*}
n\geq \dim R -\dim R/I+s.    
\end{equation*}

Next, set $\Omega \coloneqq H_0(\Omega_{n-1}^S (F\otimes_R S))$. We claim that the $S$-module $\Omega$ is not projective. Indeed, since $s\leq n-1$, the $S$-complex 
\begin{equation*}
0 \to F_n \otimes_R S \to F_{n-1} \otimes_R S \to 0\,,    
\end{equation*}
with $F_{n-1} \otimes_R S$ in degree zero, is a free $S$-resolution of $\Omega$. Since $S$ is a big Cohen-Macaulay algebra, we have $\fm S \neq S$, where $\fm$ is the maximal ideal of $R$. Therefore, there exists a maximal ideal $\fn$ of $S$ containing $\fm S$. By $F_n\neq 0$, we have that
\begin{equation*}
\Tor[1] {S} {S/\fn} {\Omega}\cong (S/\fn)\otimes_S (F_n \otimes_R S) \cong (S/\fn)\otimes_R F_n \neq 0.    
\end{equation*}
Thus, $\Omega$ is not flat, and therefore not projective. We can now use \eqref{level_bound_ineq} to deduce that
\begin{equation*}
\level^S(F\otimes_R S)\geq n-s+1.
\end{equation*} 
From the base change result \cref{level_change_basis}, we then have 
\begin{equation*}
\level^R F \geq \level^S(F\otimes_R S)\geq n-s+1.    
\end{equation*}
Furthermore, from \eqref{inequality_with_I}, one has $n-s\geq \dim R-\dim R/I$, and the proof is complete. 
\end{proof}

Here is an immediate application of Theorem \ref{theorem_level_ineq}.

\begin{proposition}
Let $R$ be a commutative noetherian local ring.
\begin{enumerate}
\item \label{level_koszulcomplex_ideal} If $\underline{x}\coloneqq x_1, \ldots, x_n$ is a generating set for a proper ideal $I$ of $R$, then 
\begin{equation*}
\edim R+1 \geq \level^R K(\underline{x}; R)\geq \dim R -\dim R/I+1.    
\end{equation*}
\item \label{level_koszulcomplex_psop} If $\underline{x}= x_1, \ldots, x_n$ forms a  (partial) system of parameters for $R$, then
\begin{equation*}
\level^R K(\underline{x}; R)= \dim R -\dim R/I+1=n+1.    
\end{equation*}
\end{enumerate}    
\end{proposition}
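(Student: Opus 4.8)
The plan is to read off both lower bounds from \cref{theorem_level_ineq}, and to obtain the upper bounds from general properties of levels: the length estimate \cite[Lemma 2.5.2]{ABIM:2010} for part (2), and base change along a Cohen presentation together with the bound on levels over regular local rings recalled in Section~\ref{Level} for part (1).

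For the lower bounds, I would apply \cref{theorem_level_ineq} to the finite free complex $F=K(\underline x;R)$ and the ideal $I$. Here $H_0(K(\underline x;R))=R/I$ is nonzero because $I$ is proper (a partial system of parameters generates a proper ideal); the Koszul homology $H_i(\underline x;R)$ is annihilated by $I$ for every $i$, hence is $I$-power torsion for $i\ge 1$; and the canonical minimal generator $1+I$ of $R/I=H_0(K(\underline x;R))$ is killed by $I$, hence is $I$-power torsion. Therefore $\level^R K(\underline x;R)\ge\dim R-\dim R/I+1$, which in part (2) equals $n+1$ since a partial system of parameters $x_1,\ldots,x_n$ satisfies $\dim R/I=\dim R-n$.

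For the upper bound in part (2): the complex $K(\underline x;R)$ is a finite free complex concentrated in homological degrees $0,\ldots,n$, so $\level^R K(\underline x;R)\le n+1$ by the length estimate, and together with the lower bound this gives the stated equalities. For the upper bound in part (1): since the $R$-level of a perfect complex is unchanged under $\fm$-adic completion, I may assume $R$ is complete and, by the Cohen structure theorem, write $R=Q/\mathfrak a$ with $(Q,\fn)$ regular local and $\dim Q\le\edim R+1$. Lifting $\underline x$ to a sequence $\tilde{\underline x}$ in $\fn$ gives $K(\underline x;R)\simeq K(\tilde{\underline x};Q)\otimes_Q R$, so \eqref{level_change_basis} and the bound on levels of perfect complexes over the regular local ring $Q$ recalled in Section~\ref{Level} yield
\begin{equation*}
\level^R K(\underline x;R)\le\level^Q K(\tilde{\underline x};Q)\le\edim R+1.
\end{equation*}
One may, if convenient, first reduce to the case where $\underline x$ minimally generates $I$ by repeatedly using the quasi-isomorphism $K(x_1,\ldots,x_n;R)\simeq K(x_1,\ldots,x_{n-1};R)\oplus\susp K(x_1,\ldots,x_{n-1};R)$, which holds whenever $x_n\in(x_1,\ldots,x_{n-1})$ because multiplication by such an $x_n$ is null-homotopic on $K(x_1,\ldots,x_{n-1};R)$.

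The only step beyond bookkeeping is the upper bound in part (1): it relies on the invariance of the $R$-level under completion and on the fact that the regular presentation can be chosen with $\dim Q\le\edim R+1$, after which base change and the regular-ring estimate conclude the argument. The lower bounds use nothing beyond the identity $I\cdot H(\underline x;R)=0$, and the upper bound in part (2) is immediate from the length estimate.
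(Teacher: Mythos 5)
Your proof takes essentially the same route as the paper: lower bounds from \cref{theorem_level_ineq} applied to $K(\underline{x};R)$ (noting $H_0=R/I$, its torsion minimal generator $1+I$, and that $I$ kills all higher Koszul homology), the length bound for part (2), and a Cohen presentation plus base change for the upper bound in part (1). One small imprecision: you write $\dim Q\le\edim R+1$ for the Cohen presentation, but a minimal Cohen presentation gives $\dim Q=\edim R$, and the regular-ring level bound one should invoke is $\level^Q\le\dim Q+1$ (the paper cites \cite[Theorem~5.5]{ABIM:2010} for this in the proof, whereas the informal remark in Section~\ref{Level} drops the ``$+1$''); taken together these give $\level^Q K(\tilde{\underline x};Q)\le\edim R+1$ as you claim, and the paper also supplies the precise reference \cite[Corollary~2.11]{L:2021} for the invariance of level under the faithfully flat completion $R\to\hat{R}$, which your argument uses but does not cite.
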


\begin{proof}
For part \ref{level_koszulcomplex_ideal}, the inequality on the right comes from applying Theorem \ref{theorem_level_ineq} to the Koszul complex $K(\underline{x}; R)$ which is a perfect $R$-complex with $I=(\underline{x})$-torsion homology. For the inequality on the left, we take a minimal Cohen presentation of $R$, i.e., a surjective map $Q\twoheadrightarrow \hat{R}$, with $Q$ a regular ring and $\edim R=\dim Q$. From \cite[Theorem 5.5]{ABIM:2010} and $Q$ having finite global dimension, we deduce that
\begin{equation*}
\dim Q+1\geq \level^Q K(\underline{x}; Q).    
\end{equation*}
The exact functor $-\otimes^L_Q \hat{R}$ along with \eqref{level_change_basis} yield the following inequality
\begin{equation*}
\level^Q K(\underline{x}; Q)\geq \level^{\hat{R}} K(\underline{x}; \hat{R}).    
\end{equation*}
From \cite[Corollary 2.11]{L:2021} and the fact that the completion map $R\to \hat{R}$ is faithfully flat, we deduce that 
\begin{equation*}
\level^{\hat{R}} K(\underline{x}; \hat{R})=\level^{R} K(\underline{x}; R),    
\end{equation*}
which completes this part.

For part \ref{level_koszulcomplex_psop}, since $\underline{x}$ is a (partial) system of parameters, we have 
\begin{equation*}
\dim R-\dim R/I+1=n+1    
\end{equation*}
and it is enough to show that $\level^R K(\underline{x}; R)\leq \dim R-\dim R/I+1$. Note that the $R$-level of a finite free $R$-complex is always at most its length from \cite[Lemma 2.5.2]{ABIM:2010}. This yields the desired inequality
\begin{equation*}
\level^R K(\underline{x}; R)\leq n+1\,,    
\end{equation*}
which completes the proof. 
\end{proof}

\begin{remark} Part \ref{level_koszulcomplex_psop} of the previous proposition also demonstrates that the lower bound on the $R$-level provided by Theorem \ref{theorem_level_ineq} is optimal.  
\end{remark}

\subsection{Free rank} The \emph{free rank} of an $R$-module $M$ is the largest rank of a free direct summand of $M$; it is denoted by $\f_rank_{R}(M)$. We obtain the following lower bound on the level of the Koszul complex, and more generally over the setting of dg-algebras; consider \cite{A:2010} for the definition and properties of dg-algebras.

\begin{proposition} \label{level_frank_dgalgebra}
Let $(R,\fm, k)$ be a commutative noetherian local ring, $I$ a proper ideal in $R$, and $A$ a dg-algebra over $R$. If $H_0(A)=R/I$, and $A$ is a minimal dg-algebra, meaning $\partial_i(A)\subseteq \fm A_{i-1}$ for all $i$, then 
\begin{equation*}
\level^R A \geq \f_rank_{R/I} (I/I^2)+1.    
\end{equation*}
Specifically, when $I$ is generated by $(\underline{x})$, then 
\begin{equation*}
\level^R K(\underline{x};R) \geq \f_rank_{R/I} (I/I^2)+1.    
\end{equation*}
\end{proposition}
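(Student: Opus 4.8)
The plan is to reduce the statement to an application of Theorem~\ref{theorem_level_ineq} by locating, inside the dg-algebra $A$, a perfect complex with the right torsion properties whose level is bounded by that of $A$ and whose projective dimension detects the free rank of $I/I^2$. First I would pass to $\hat R$, since completion is faithfully flat and, by \cite[Corollary 2.11]{L:2021}, does not change the $R$-level; this lets me assume $R$ is complete and, in particular, admits a minimal Cohen presentation $Q \twoheadrightarrow R$. The key structural input is that a minimal dg-algebra $A$ with $H_0(A) = R/I$ has underlying free module $A_\natural = R\langle \text{variables}\rangle$, and the two-term truncation
\begin{equation*}
A_1 \xrightarrow{\ \partial_1\ } A_0
\end{equation*}
is a free presentation of $R/I$, so that $\coker(\partial_1) = R/I$ and $\image(\partial_1) = I$. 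Minimality forces $\partial_1(A_1) \subseteq \fm A_0$, and base-changing along $R \to R/I$ one finds $I/I^2 \cong \operatorname{coker}(\bar\partial_2 \colon A_2 \otimes R/I \to A_1 \otimes R/I)$ with $\bar\partial_1 = 0$; this is the standard identification of the beginning of the cotangent/Koszul-homology complex. The free rank $r \coloneqq \f_rank_{R/I}(I/I^2)$ then says that after a change of basis of $A_1$ we may split off $r$ copies of $R/I$ as a direct summand not hit by $\bar\partial_2$.

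Next I would build the perfect complex to feed into Theorem~\ref{theorem_level_ineq}. The natural candidate is $F \coloneqq K(\underline{y}; R)$ where $\underline{y} = y_1,\dots,y_r$ are elements of $I$ chosen so that their residues form a basis of a free summand of $I/I^2$; equivalently, $F$ is the Koszul complex on a minimal system of elements realizing the free rank. Since $\underline{y} \subseteq I$, every $H_i(F)$ for $i \geq 1$ is $(\underline y)$-torsion, hence $I$-power torsion, and $H_0(F) = R/(\underline y)$ has a minimal generator (the class of $1$) annihilated by $I$, so $F$ satisfies the hypotheses of Theorem~\ref{theorem_level_ineq} with the ideal $I$. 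The theorem gives $\level^R F \geq \dim R - \dim R/I + 1$, but that is not quite what we want; instead I would use the version of the argument that tracks $\operatorname{projdim}$ directly — what we actually need is $\projdim_R R/(\underline y) \geq r$, which is immediate because $\underline y$ is a regular sequence modulo nothing forces length $r$... more carefully, because the $y_i$ are linearly independent in $I/I^2 \subseteq \fm/\fm^2$-style argument gives that the Koszul complex $K(\underline y; R)$ is minimal, so it has length exactly $r$ and hence $\level^R K(\underline y;R) = $ (something $\leq r+1$), which is the wrong direction.

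So the honest approach is different: rather than picking a sub-Koszul-complex, I would work with $A$ itself and mimic the proof of Theorem~\ref{theorem_level_ineq}, replacing the role of $\dim R - \dim R/I$ (which came from the Cohen–Macaulay defect computation $\depth_R S - \depth_R(F\otimes S)$) by the free-rank quantity. Concretely: tensor $A$ with an $\fm$-complete balanced big Cohen–Macaulay $R$-algebra $S$, set $s = \sup H_*(A \otimes_R S)$, and run the same $\operatorname{Ass}$/depth inequalities. The new ingredient is that the free summand $(R/I)^{\oplus r} \hookrightarrow I/I^2$ produces, after $\otimes_R S$ and localization, a lower bound $\projdim \geq r + s$ in place of $\dim R - \dim R/I + s$; this is where one uses that a free direct summand of $I/I^2$ survives base change to $S/\fm S \neq 0$ and contributes independently to $\operatorname{Tor}_1$, exactly as in the final paragraph of the proof of Theorem~\ref{theorem_level_ineq} where $F_n \neq 0$ gave nonvanishing $\operatorname{Tor}_1$ of the top syzygy. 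Then $\level^S(A \otimes_R S) \geq (r+s) - s + 1 = r+1$ by \eqref{level_bound_ineq}, and \eqref{level_change_basis} transfers this to $\level^R A \geq r+1$. The specialization to $K(\underline x; R)$ is then immediate since the Koszul complex is a minimal dg-algebra with $H_0 = R/(\underline x)$.

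I expect the main obstacle to be the homological algebra identifying the free rank of $I/I^2$ with a genuine contribution to a projective dimension or a $\operatorname{Tor}_1$ after base change — i.e., showing that a free summand $(R/I)^{\oplus r}$ of $I/I^2$ is detected by the second differential of $A$ in a way that forces the $(n\!=\!?)$-th term of the relevant truncated resolution to be nonzero over $S$. Getting the truncation degree right (one wants the analogue of "$F_n \neq 0$" to be the statement "$\bar\partial_2$ is not surjective onto the free summand", i.e., a rank statement on $A_1 \otimes R/I$) and checking that this rank statement is stable under $-\otimes_R S$ using $\fm S \neq S$ is the delicate point; everything else is a transcription of the proof of Theorem~\ref{theorem_level_ineq} with $\dim R - \dim R/I$ replaced by $\f_rank_{R/I}(I/I^2)$.
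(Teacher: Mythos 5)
Your proposal, as you yourself note, hinges on one unproved claim --- that after tensoring $A$ with a big Cohen--Macaulay algebra $S$ the free summand $(R/I)^{\oplus r}$ of $I/I^2$ forces a bound of the shape ``$\projdim\geq r+s$'' that can be fed into \eqref{level_bound_ineq} --- and that step is not merely delicate, it cannot be carried out. To apply \eqref{level_bound_ineq} and get $r+1$ you need the homology of $A\otimes_R S$ to vanish in a window of length $r$, and the length of the available window is governed by depth, not by the free rank of $I/I^2$. Concretely, take $I=\fm$ with $R$ Cohen--Macaulay and not regular, and $A=K(\underline{x};R)$ on a minimal generating set of $\fm$, so $r=\edim R$ and $A$ has length $n=\edim R$. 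For an $\fm$-complete balanced big Cohen--Macaulay algebra $S$ one has $\depth_R(\fm,S)=\dim R$, so by \eqref{depth_koszul} $\sup H_*(A\otimes_R S)=\edim R-\dim R>0$; hence the only vanishing window has length $\dim R$ and \eqref{level_bound_ineq} applied to $A\otimes_R S$ can give at best $\dim R+1<\edim R+1$, no matter which non-projective syzygy or $\Tor[1]{S}{-}{-}$ computation you use. There is also a structural obstruction: your argument only uses that $A$ is a minimal complex with $H_0(A)=R/I$, never the multiplication on $A$, but the statement is false at that level of generality --- over a regular local ring of dimension $d\geq 2$ the two-term minimal complex $0\to R^{d}\xrightarrow{(x_1,\ldots,x_d)}R\to 0$ has $H_0=k=R/\fm$ and $R$-level at most $2$, while the free rank of $\fm/\fm^2$ is $d$. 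So any correct proof must use the dg-algebra structure, which your sketch does not. (Your first attempt, feeding a Koszul complex on elements realizing the free rank into \cref{theorem_level_ineq}, you rightly discarded: it bounds the level of the wrong complex, and in the wrong direction.)

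The paper's proof runs along entirely different lines and does not pass through \cref{theorem_level_ineq} or big Cohen--Macaulay algebras at all. One extends $A\to H_0(A)=R/I$ to a chain map $f\colon A\to G$ into a minimal free resolution $G$ of $R/I$, and by \cite[Proposition 2.4]{AGMSV:2017} it suffices to prove $f_s\otimes k\neq 0$ for $s$ the free rank of $I/I^2$. Here the multiplicativity of $A$ is essential: the dg-algebra map $K(\underline{x};R)\to A$ makes the induced map on $H_s(-\otimes k)$ factor through the natural algebra map $\bigwedge^{s}\Tor[1]{R}{R/I}{k}\to\Tor[s]{R}{R/I}{k}$, and its nonvanishing in degree $s$ is exactly Iyengar's theorem \cite[Proposition 2.1]{I:2001}, which exhibits $\Tor{R}{R/I}{k}$ as $B\otimes_k\Lambda$ with $\Lambda$ an exterior algebra on $s$ degree-one classes. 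These two ingredients --- the level criterion via a map to the minimal resolution, and the exterior-algebra tensor factor of the Tor algebra detecting free summands of the conormal module --- are the content of the proof, and neither appears in your proposal.
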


\begin{proof} 
The natural map $A\to H_0(A)=R/I$ can be extended to a morphism of complexes $f\colon A\to G$, where $G$ is a minimal free resolution of $R/I$ over $R$, with differentials denoted by $\partial^G$. Using \cite[Proposition 2.4]{AGMSV:2017}, it is enough to show that $\mathrm{Im} (f_s)\nsubseteq \fm G_s +\mathrm{Ker}(\partial_s^G)$, where $s=\f_rank_{R/I} (I/I^2)$. Since $G$ is a minimal free resolution, we have 
\begin{equation*}
\mathrm{Ker}(\partial_s^G)=\mathrm{Im}(\partial_{s+1}^G)\subseteq \fm G_s.    
\end{equation*}
Hence, $\mathrm{Im} (f_s)\nsubseteq \fm G_s +\mathrm{Ker}(\partial_s^G)$ is equivalent to $\mathrm{Im} (f_s)\nsubseteq \fm G_s$, which is in turn equivalent to $f_s \otimes k\neq 0$. We observe that for $\underline{x}\coloneqq x_1, \ldots, x_n$ a generating set of $I$, the induced map $K(\underline{x};R) \to R/I$ extends to a morphism of complexes $g\colon K(\underline{x};R)\to  G$. Also, specifying the image of each $x_i \in K_1(\underline{x}; R)$ in $A_1$ determines a map of dg $R$-algebras $h \colon K(\underline{x}; R) \to A$ and yields the following commutative diagram
\begin{equation*}
\begin{tikzcd}
K(\underline{x};R) \ar[rr, "h"] \ar[dr, "g" swap] & & A\ar[dl, "f"]\\
& G &
\end{tikzcd}    
\end{equation*}
By tensoring this diagram with the residue field $k$ and taking the $s$-th homology, we obtain the following commutative diagram
of dg-algebras 
\begin{equation*}
\begin{tikzcd}
H_s(\underline{x}; k) \ar[rr, "H_s(h\otimes k)"] \ar[dr, "H_s(g\otimes k)" swap] & & H_s(A\otimes_R^L k) \ar[dl, "H_s(f\otimes k)"]\\
& \Tor [s] R {R/I} k&
\end{tikzcd}    
\end{equation*}
It is easy to see that $H_1(g\otimes k)$ is an isomorphism. Therefore, the morphism $H_s(g\otimes k)$ factors as follows
\begin{equation*}
\begin{tikzcd}
H_s(\underline{x}; k) \ar[r, "\cong"] \ar[dr, "H_s(g\otimes k)" swap] &  \bigwedge^s \Tor[1] R {R/I} k \ar[d, "\kappa_s"]\\
& \Tor [s] R {R/I} k
\end{tikzcd}    
\end{equation*}
where $\kappa \colon \wedge \Tor[1] R {R/I} k\to\Tor R {R/I} k$ is the natural map of graded $k$-algebras. We claim that $\kappa_s\neq 0$, which implies that $H_s(f\otimes k)\neq 0$, and hence that $f_s \otimes k \neq 0$. 

To prove the claim, we first apply \cite[Proposition 2.1]{I:2001} to 
\begin{equation*}
\Tor R {R/I} k\cong R/I \otimes^L_R k   
\end{equation*}
and we get the following isomorphism of $k$-algebras
\begin{equation*}
\Tor R {R/I} k \cong B \otimes_{k} \Lambda    
\end{equation*}
where $B$ is a graded $k$-algebra with $B_0=k$ and
\begin{equation*}
\Lambda =\bigwedge (y_1, \ldots, y_s)\,,    
\end{equation*}
with $s=\f_rank_{R/I} (I/I^2)$ and $\partial y_i=0$, $|y_i|=1$ for all $i=1, \ldots ,s$. Then, $\Tor[1] R {R/I} k$ can be realized as $B_1 \oplus (y_1, \ldots, y_s)$, and by taking exterior algebras on the map $(y_1, \ldots, y_s)\hookrightarrow \Tor[1] R {R/I} k$ the following embedding is being deduced
\begin{equation*}
\Lambda=\bigwedge (y_1, \ldots, y_s) \hookrightarrow \bigwedge \Tor[1] R {R/I} k.  
\end{equation*}
Next, by composing with $\kappa$, we get the map
\begin{equation*}
\Lambda \hookrightarrow \bigwedge \Tor[1] R {R/I} k \xrightarrow{\kappa} \Tor R {R/I} k= B \otimes_k \Lambda\,,    
\end{equation*}
which is clearly non-zero on degree $s$, as $\Lambda_s=\bigwedge^s (y_1, \ldots, y_s) \neq 0$. Hence, $\kappa_s\neq 0$, and the proof has been completed.
\end{proof}

\subsection{Lech-Independent sequences} A sequence $\underline{x}=x_1, \ldots, x_n$ is called \emph{Lech-independent} if for $I=(\underline{x})=(x_1,\ldots, x_n)$, the natural surjection
\begin{equation*}
(R/I)^n \longrightarrow I/I^2    
\end{equation*}
is an isomorphism, see \cite{B:2023, Ha:2001, L:1964}. A regular sequence is also Lech-independent, and the converse holds when, additionally, $I$ has finite projective dimension, see \cite{V:1967}. Another example of a Lech-independent sequence is a minimal generating set of the maximal ideal of a local ring.  Lech-independent sequences are closed under flat base change.

The following result is immediate from \cref{level_frank_dgalgebra} and generalizes the \cite[Theorem 4.2(1)]{AGMSV:2017}, which concerns the case of the maximal ideal. 

\begin{corollary} \label{level_koszulcomplex_indep} Let $(R,\fm, k)$ be a commutative noetherian local ring and $\underline{x}=x_1,\ldots, x_n$ be a Lech-independent sequence, then 
\begin{equation*}
\level^R K(\underline{x};R) =n+1.    
\end{equation*}
\end{corollary}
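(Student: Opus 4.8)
The plan is to deduce the equality directly from \cref{level_frank_dgalgebra} together with the standard length bound on levels. Write $I=(\underline x)$. Since $K(\underline x;R)$ is a finite free $R$-complex, \cite[Lemma 2.5.2]{ABIM:2010} gives $\level^R K(\underline x;R)\le n+1$, so everything reduces to the reverse inequality $\level^R K(\underline x;R)\ge n+1$.

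For the lower bound I would invoke the last assertion of \cref{level_frank_dgalgebra} applied to $\underline x$: the Koszul complex $K(\underline x;R)$ is a dg-algebra over $R$ with $H_0(K(\underline x;R))=R/I$, and it is minimal since its differentials land in $\fm K(\underline x;R)$ — their entries are the $\pm x_i$, which lie in $\fm$ (recall $\underline x$ is a sequence in $\fm$, so $I$ is proper). Hence $\level^R K(\underline x;R)\ge\f_rank_{R/I}(I/I^2)+1$, and the proof is reduced to the computation $\f_rank_{R/I}(I/I^2)=n$.

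This is exactly where Lech-independence is used: by definition, the natural surjection $(R/I)^n\to I/I^2$ is an isomorphism, so $I/I^2$ is a free $R/I$-module of rank $n$. Since $R$ is local, so is $R/I$; consequently the rank of a free $R/I$-module is well defined, and $(R/I)^n$ is its own largest free direct summand — any free summand, after base change to the residue field $k$, becomes a $k$-subspace of $k^n$. Thus $\f_rank_{R/I}(I/I^2)=n$, and substituting into the previous inequality gives $\level^R K(\underline x;R)\ge n+1$. Combined with the upper bound, this yields the claimed equality.

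I do not expect a genuine obstacle: the corollary is essentially a specialization of \cref{level_frank_dgalgebra}. The only points requiring (routine) verification are that $K(\underline x;R)$ really is a minimal dg-algebra with $H_0=R/I$ — which is where one uses that the $x_i$ lie in $\fm$ — and the elementary fact that a free module of rank $n$ over the local ring $R/I$ has free rank exactly $n$.
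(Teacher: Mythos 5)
Your proposal is correct and follows the paper's own argument: the upper bound comes from the length bound \cite[Lemma 2.5.2]{ABIM:2010}, and the lower bound from applying \cref{level_frank_dgalgebra} to $K(\underline{x};R)$ together with the observation that Lech-independence gives $\f_rank_{R/I}(I/I^2)=n$. The extra verifications you record (minimality of the Koszul complex and the free-rank computation over the local ring $R/I$) are exactly the routine checks left implicit in the paper.
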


\begin{proof}
It is clear that $\level^R K(\underline{x}; R)\leq n+1$ from \cite[Lemma 2.5.2]{ABIM:2010}. For the reverse inequality, we apply \cref{level_frank_dgalgebra} to the sequence $\underline{x}=x_1, \ldots, x_n$. From $\underline{x}$ being a Lech-independent sequence, $\f_rank_{R/I}(I/I^2)=n$, and this completes the proof. 
\end{proof}

\section{Examples}
With stronger assumptions on the complex, a sharper lower bound than the one in \cite[Theorem 4.2]{AIN:2018}—involving the superheight of an ideal—was established in \cite[Theorem 5.1]{ABIM:2010}. The \emph{superheight} of an  ideal $I$ is defined as the number 
\begin{equation*}
\superheight I = \sup \set{\height IT}{ T \text{ is a noetherian } R\text{-algebra}}.   
\end{equation*}
The following result is a special case of \cite[Theorem 5.1]{ABIM:2010} and has also been proved in \cite[Theorem 3.2]{AGMSV:2017}. Using Theorem \ref{theorem_level_ineq}, the proof is significantly simplified. 

\begin{corollary}
Let $R$ be a commutative noetherian local ring,  $F$ a perfect $R$-complex and $I\subseteq R$ the annihilator of $\bigoplus_{i\in \mathbb{Z}} H_i(F)$. Then, the following inequality holds
\begin{equation*}
\level^R F \geq \superheight I + 1.
\end{equation*}
\end{corollary}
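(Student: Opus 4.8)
The plan is to reduce, by localizing an auxiliary algebra, to a situation where \cref{theorem_level_ineq} applies verbatim. If $H_*(F)=0$ then $F\simeq 0$ and $I=R$, and the inequality is vacuous; so assume $H_*(F)\neq 0$, so that $I$ is a proper ideal and, $F$ being perfect, $\supp F=\bigcup_i\supp H_i(F)=V(I)$. Replacing $F$ by a suitable shift of its minimal free resolution (which changes neither $\level^R F$ nor the ideal $I$), I may assume that $F$ is a minimal finite free $R$-complex concentrated in nonnegative degrees with $H_0(F)\neq 0$. It then suffices to prove $\level^R F\geq\height(IT)+1$ for every noetherian $R$-algebra $T$ with $IT\neq T$ — these being the ones relevant to the supremum defining $\superheight I$ — and to take the supremum over such $T$.

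So fix such a $T$, and choose $\mathfrak{q}\in\Spec T$ minimal over $IT$ with $\height\mathfrak{q}=\height(IT)$; such a prime exists since $\height(IT)$ is realized at a minimal prime over $IT$. Put $S\coloneqq T_{\mathfrak{q}}$. Then $S$ is a noetherian local $R$-algebra with maximal ideal $\mathfrak{m}_S$, one has $\dim S=\height\mathfrak{q}=\height(IT)$, and $IS$ is $\mathfrak{m}_S$-primary, so $\dim S/IS=0$. I will apply \cref{theorem_level_ineq} to the finite free $S$-complex $F\otimes_R S$ with respect to the ideal $\mathfrak{m}_S$.

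It remains to verify the hypotheses. Since $R\to S$ is a local homomorphism and $F$ is minimal, $F\otimes_R S$ is again minimal, hence a finite free $S$-complex in nonnegative degrees with $(F\otimes_R S)_0\neq 0$, so $H_0(F\otimes_R S)\neq 0$. The same minimality argument shows that the base change of a nonzero perfect complex along a local homomorphism is nonzero, so $\supp_S(F\otimes_R^L S)$ is the preimage of $\supp F=V(I)$ under $\Spec S\to\Spec R$, namely $V(IS)$, and $V(IS)=\{\mathfrak{m}_S\}$ because $IS$ is $\mathfrak{m}_S$-primary. Hence every $H_i(F\otimes_R S)$ is a finitely generated $S$-module supported only at $\mathfrak{m}_S$, and is therefore $\mathfrak{m}_S$-power torsion; in particular $H_0(F\otimes_R S)$ is $\mathfrak{m}_S$-power torsion, so a minimal generator of it is as well. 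Thus \cref{theorem_level_ineq} applies and gives
\begin{equation*}
\level^S(F\otimes_R S)\geq\dim S-\dim S/\mathfrak{m}_S+1=\dim S+1=\height(IT)+1.
\end{equation*}
Because $F$ is a bounded complex of free modules, $F\otimes_R^L S\simeq F\otimes_R S$, so \eqref{level_change_basis} yields $\level^R F\geq\level^S(F\otimes_R^L S)\geq\height(IT)+1$. Taking the supremum over all admissible $T$ gives $\level^R F\geq\superheight I+1$; since $\level^R F$ is finite ($F$ being perfect), this incidentally shows $\superheight I<\infty$.

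Everything after the reduction is formal bookkeeping; the one point deserving care is the geometry of the base change. Localizing $T$ at a minimal prime over $IT$ of least height does two things at once — it turns $\height(IT)$ into $\dim S$, and it makes $IS$ an $\mathfrak{m}_S$-primary ideal so that the correction term $\dim S/IS$ in \cref{theorem_level_ineq} vanishes — while $F\otimes_R S$ survives as a nonzero perfect $S$-complex whose homology is supported only at the closed point. Getting that package right, rather than any delicate estimate, is the crux of the argument.
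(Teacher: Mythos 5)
Your overall strategy matches the paper's (base change to a noetherian $R$-algebra, then invoke \cref{theorem_level_ineq}); you are more explicit about the localization step than the paper is, which is a genuine improvement in exposition. However, there is one concrete error.

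You assert that ``$R\to S$ is a local homomorphism,'' and you use this to conclude that $F\otimes_R S$ is a minimal complex with $H_0(F\otimes_R S)\neq 0$. This is false in general: the contraction $\mathfrak{q}\cap R$ is a prime containing $I$ but it need not be $\mathfrak{m}$, so $\mathfrak{m}S$ need not land in $\mathfrak{m}_S$. For a concrete failure, take $R=k\llbracket x,y\rrbracket$ and $F=K(y;R)\oplus\susp K(x;R)$, so that $H_0(F)=R/(y)$, $H_1(F)=R/(x)$, and $I=(x)\cap(y)=(xy)$. With $T=R$ and $\mathfrak{q}=(x)$, the element $y$ becomes a unit in $S=R_{(x)}$, so $K(y;S)\simeq 0$ and $H_0(F\otimes_R S)=0$ while $H_1(F\otimes_R S)=S/(x)\neq 0$. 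Thus $F\otimes_R S$ is neither minimal nor does it have nonzero $H_0$, and \cref{theorem_level_ineq} cannot be applied to it as written.

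The gap is repairable using only what you have already correctly established. Your support computation (which uses the local homomorphisms $R_{\phi(\mathfrak{q}')}\to S_{\mathfrak{q}'}$, and those \emph{are} local) shows $\supp_S(F\otimes_R S)=V(IS)=\{\mathfrak{m}_S\}$, hence $F\otimes_R S\not\simeq 0$ and every $H_i(F\otimes_R S)$ is $\mathfrak{m}_S$-power torsion. Now set $a\coloneqq\inf\set{i}{H_i(F\otimes_R S)\neq 0}$ and apply \cref{theorem_level_ineq} to (a minimal free resolution of) $\susp^{-a}(F\otimes_R S)$; all its homology, in particular $H_0\neq 0$, is $\mathfrak{m}_S$-power torsion, so the theorem gives $\level^S(F\otimes_R S)=\level^S\susp^{-a}(F\otimes_R S)\geq\dim S+1=\height(IT)+1$, and the rest of your argument is unaffected. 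For comparison, the paper's own proof is terser: it takes an arbitrary noetherian $R$-algebra $S$, applies \cref{theorem_level_ineq} directly to $S\otimes F$ with the ideal $IS$ to get $\level^S(S\otimes F)\geq\dim S-\dim S/IS+1\geq\height IS+1$, and concludes via \cref{level_change_basis}; it leaves implicit both the localization to make $S$ local and the shift to arrange $H_0\neq 0$. Your version makes those steps explicit, which is why the error in the local-homomorphism claim is visible; once you replace ``minimality'' by ``shift to the infimum degree'' the proof is correct and faithful to the paper's approach.
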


\begin{proof}
Let $S$ be a noetherian $R$-algebra. The ideal $IS$ annihilates 
\begin{equation*}
\bigoplus_{i \in \mathbb{Z}} H_i(S \otimes F)\,,    
\end{equation*}
hence, by applying the previous theorem, we get the first inequality
\begin{equation*}
\level^S(S\otimes F) \geq \dim S -\dim S/IS +1\geq \height I+1,
\end{equation*}
the second one is standard. Additionally, from \eqref{level_change_basis}, we obtain that 
\begin{equation*}
\level^R F\geq  \level^S(S\otimes F)\,,    
\end{equation*}
which completes the proof.
\end{proof}

Another interesting number is the \emph{bigheight} of an ideal $I$ defined as follows:
\begin{equation*}
\bigheight I = \sup \set{\height \fp }{\fp \text{ is minimal prime over } I}.    
\end{equation*}
It is easy to check that we always have the following inequalities
\begin{equation*}
\height I\leq \bigheight I\leq \superheight I
\end{equation*}
and that the first one can be strict. Additionally, it was shown in \cite{K:1988} that the second inequality can also be strict. We recall that the following inequality always holds 
\begin{equation*}
\height I\leq \dim R-\dim R/I.    
\end{equation*}
However, we observe that the quantities $\dim R-\dim R/I$ and $\bigheight I$ (or even $\superheight I$) cannot be compared. 

On the one hand, take the ring $R=k\llbracket x_1, \ldots, x_n\rrbracket$ and the ideal $I=(x_1)\cap(x_2,\ldots, x_n)$ for which we have 
\begin{equation*}
1=\dim R-\dim R/I \quad \text{and}\quad \bigheight I=n-1.    
\end{equation*}
On the other hand, the ring $R=k\llbracket x_1, \ldots, x_n \rrbracket/ ((x_1)\cap (x_2,\ldots,x_n))$ and the ideal $I=(x_2, \ldots, x_n)$ give
\begin{equation*}
n-2=\dim R-\dim R/I \quad \text{and}\quad \bigheight I=0.    
\end{equation*}
This last example also shows that the quantity $\dim R-\dim R/I$ can be arbitrarily larger than $\height I$. 

\begin{remark}
One could ask whether, under the conditions of Theorem \ref{theorem_level_ineq}, the quantity $\dim R - \dim R/I$ could be replaced by $\superheight I $ or $\bigheight I$. The answer is negative, as we can see by considering the ring $R=k\llbracket x_1, \ldots, x_n \rrbracket$ for $n\geq 3$, the ideal $I=(x_1)\cap (x_2,\ldots, x_n)$ and the perfect $R$-complex $F=K(x_1; R)$. It is clear that $F$ has $I$-torsion homology and $\level^R F=2$, but $\bigheight I=n-1$.
\end{remark}

\subsection{Tensor nilpotent and fiberwise zero maps} A morphism $f\colon X \to Y$ in $\cat{D}(R)$ is called \emph{tensor nilpotent} if for some $n \in \mathbb{N}$ the map $\otimes^n f\colon \otimes^n_R X \to \otimes^n_R Y$ is zero in $\cat{D}(R)$. A morphism $f\colon X \to Y$ in $\cat{D}(R)$ is called \emph{fiberwise zero} if for all $\fp \in \Spec(R)$ the map $k(\fp)\otimes^L_R f$ is zero in $\cat{D}(k(\fp))$; see \cite[3.2]{AIN:2018}. These two notions are equivalent when the map $f$ is between perfect $R$-complexes, see \cite{H:1987,N:1992}. In this subsection, we will investigate the relation between Theorem \ref{theorem_level_ineq} and \cite[Theorem 4.1]{AIN:2018}. This result states that over a commutative noetherian ring $R$, if there exists a morphism $f \colon G \to F$ of perfect $R$-complexes, which is not fiberwise zero and factors through an $R$-complex with $I$-torsion homology for some ideal $I \subseteq R$, then the following inequality holds
\begin{equation}
\label{theorem_AIN_4.1}
\level^R \Hom R G F \geq  \height I + 1.    
\end{equation}
This result led to the version \cite[Theorem 4.2]{AIN:2018} of the Improved New Intersection Theorem. We observe that under the conditions of \ref{theorem_AIN_4.1}, we cannot replace $\height I$ with $\dim R-\dim R/I$. To see this, we take the ring 
\begin{equation*}
R=k \llbracket x_1, \ldots, x_n \rrbracket/((x_1)\cap (x_2,\ldots,x_n))\,,    
\end{equation*}
the perfect $R$-complexes $F=G=R$ and the ideal $I=(x_2, \ldots, x_n)$. Consider the map $f\colon R\xrightarrow{x_1} R$, which is not tensor nilpotent since $x_1$ is not a nilpotent element in $R$. From the following diagram, one can observe that the map $f$ can be factored through the $I$-torsion $R$-complex $K(x_2, \ldots, x_n; R)$.
\begin{equation*}
\begin{tikzcd}
& & & &  0 \ar[r] \ar[d] & R \ar[r] \ar[d,"\mathrm{id}"] & 0 \\
0 \ar[r] & R \ar[r] & R^{n-1} \ar[r]  & \cdots \ar[r] & R^{n-1} \ar[r]  \ar[d] & R \ar[r] \ar[d,"x_1"] & 0 \\
& & & & 0 \ar[r] & R \ar[r]  & 0
\end{tikzcd}
\end{equation*}    
We see that  $\level^R \Hom R R R=\level^R R=1$, while the quantity 
\begin{equation*}
\dim R-\dim R/I +1= n-1    
\end{equation*}
can be arbitrarily large. 

Another natural question is whether, in \ref{theorem_AIN_4.1}, one could replace $\height I$ with $\bigheight I$. The answer is negative, as we can see by considering the ring $R=k \llbracket x_1, \ldots, x_n \rrbracket$, the ideal $I=(x_1)\cap (x_2,\ldots, x_n)$, and the perfect $R$-complexes $F=G=K(x_1; R)$. Then, we observe that the identity map on $F$ factors through the $I$-power torsion $R$-complex, $R\Gamma_I F$, and is not a fiberwise zero map. In this case, we have 
\begin{equation*}
\level^R \Hom R F F =\level^R F=2,    
\end{equation*}
but $\bigheight I +1 =n$. 

\bibliographystyle{amsplain}

\end{document}